\newtheorem{Theorem}{Theorem}
\newtheorem{Thm}{Theorem}
\newtheorem {Lm}[Thm]{Lemma}
\newtheorem {Cor} [Thm] {Corollary}
\newtheorem {Def}[Thm]{Definition}
\theoremstyle{definition}
\newtheorem {Example}{Example}
\renewenvironment{proof}{\noindent {\em Proof. }}{\hfill $\Box$}
\renewcommand {\phi} {\varphi}
\author{ David B. Wales}
\address{David B. Wales \\Department of Mathematics \\
Sloan Lab\\
Caltech\\
Pasadena, CA 91125\\
USA.}
\email{dbw@caltech.edu.}
\title{Classification of imprimitive irreducible finite subgroups of $O(7)$}
\date{\today}
\begin{document}

\begin{abstract}
This  work gives a  classification of imprimitive irreducible finite subgroups of the orthogonal group $O(7)$ plus the number of
conjugacy classes for each group.
\end{abstract}

\maketitle

\medskip
{\sc keywords:}  Orthogonal group, imprimitive linear group, quasiprimitve linear group

\medskip
{\sc AMS 2010 Mathematics Subject Classification:}
16Gxx, 17Bxx, 20Cxx

\section{Introduction}

Recently, Nicholas Katz asked if there was a classification of the finite irreducible imprimitive subgroups of $O(7)$.  He had encountered this question in studying the monodromy
groups attached to certain families of exponential sums.   He knew a priori they were irreducible subgroups of $S(O7)$ in its natural $7$-dimensional representation and
could deal with the finite primitive ones as they were known.  It would be natural to try to deal with the imprimitive ones in a similar way.  This paper provides the classification
of the imprimitive such groups.  It also corrects an error in \cite[Theorm $1$]{W} and \cite[Theorem $1$]{DW}. See Section $3$.

In Theorem \ref{Classif}
we   give a  classification of imprimitive irreducible finite subgroups of $O(7)$.  Because such a group  is imprimitive, there is a set of
 linearly independent non-trivial subspaces which span the space $R^7$ and which are permuted  by the group. See \cite[Definition $50.1$]{CR}. It is transitive because of the irreducibility.  As $7$ is a prime and the subspaces have the same dimension,
the subspaces must all have dimension 1.

\begin{Theorem}\label{Classif}  Suppose $G$ is a finite irreducible subgroup of $O(7) $ which is imprimitive.  Then $G$ is one of the groups listed below or a direct
product of $Z_2$ with one of these groups by adjoining the matrix $-I_7$.

\medskip
\noindent Case $1$.   The representation is quasi-primitive and $G$ is a simple group isomorphic to $PSL(2,7) $ as in \cite[Theorem $1$]{W} or \cite[Theorem $1$]{DW}.  The definition
of quasi-primitive is given at the beginning of section $3$.

\bigskip

 \noindent  If it is not quasi-primitive, there is a normal elementary Abelian subgroup, $A$,  of order $8$ or $64$.  We now distinguish
between the cases for $A$.  Furthermore, with one exception, $G$ is a semi-direct product of $A$ by $G/A$ which means it splits.

\medskip
\noindent Case $2$.  The finite subgroup $A$ has order $8$.  This means $G/A$ is a subgroup
of $PSL(3,2)\cong PSL(2,7)$.  Furthermore, the subgroup  $G/A$ of $PSL(3,2)$ must have order divisible by $7$ and must have a subgroup of index $7$.  The possible $G/A$ are either in the normalizer
of a Sylow $7$-group and have order $7$ or  $7\cdot3$, or $G/A$ is the full $PSL(3,2)$.  This means $G$ is an extension of $A$ of order $8$ by one of these groups.  They are all split except in the
 case of an extension by $PSL(2,3)$ in which case there are two such groups a split and a non-split group.
\medskip

\noindent Case $3$.  The finite subgroup $A$  has order $64$.  This means $G/A$ is a subgroup of
 $Sym(7)$.  The group $G/A$ must must have order a multiple of $7$ and must have a subgroup of index $7$.  The possible $G/A$ are either in the normalizer of a Sylow $7$-group in which case they have
 orders $7$, $7\cdot 2$, $7\cdot 3$,  or $7\cdot 42$ or $G/A$ is one of $PSL(3,2)$, $Alt(7)$ or $Sym(7)$.  In particular, $G$ is a split  extension of $A$ of order $64$ by one of these groups.
The groups in the normalizer of a Sylow $7$-group have a self-centralizing Sylow $7$-group.

\end{Theorem}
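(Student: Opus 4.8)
The plan is to exploit the monomial structure forced by imprimitivity. Since the system of imprimitivity consists of seven one-dimensional blocks, $G$ is a monomial group; as the representation is orthogonal and irreducible (so its invariant form is unique up to a positive scalar), I would conjugate $G$ into the group $B_7=(\Z/2)\wr Sym(7)$ of signed permutation matrices. Let $D\cong(\Z/2)^7$ be the diagonal subgroup and set $A=G\cap D$. Then $A$ is the kernel of the permutation action $\pi\colon G\to Sym(7)$, so $A\trianglelefteq G$ is elementary abelian, $P:=G/A\cong\pi(G)$ is \emph{transitive} on the seven coordinates by irreducibility, and conjugation makes $A$ an $\F_2P$-submodule of the permutation module $\F_2^{\,7}$. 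If $G$ is quasi-primitive then $A$ acts homogeneously, which forces $A\le\langle-I_7\rangle$; that case is then disposed of by \cite{W} and \cite{DW} to land in Case $1$. From now on I assume $A\not\le\langle -I_7\rangle$.

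The heart of the argument is a clean irreducibility criterion. Restricted to $A$, the space $R^7$ splits as the sum of the seven coordinate characters $\chi_i\in\mathrm{Hom}(A,\{\pm1\})$, and $P$ permutes them exactly as it permutes coordinates. Writing $R^7=\mathrm{Ind}_{G_1}^{G}(\lambda)$ for a block stabilizer $G_1$ of index $7$ and $\lambda$ its character on that block, Mackey's criterion shows (using $A\le G_1\cap G_1^{\,g}$) that the induced module is irreducible if and only if the $\chi_i$ are pairwise distinct. Since $\chi_i=\chi_j$ iff $e_i+e_j\in A^{\perp}$, where $A^{\perp}\le\F_2^{\,7}$ is the annihilator (dual) code, irreducibility is equivalent to: $P$ transitive and $A^{\perp}$ has minimum weight $\ge 3$. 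I would then enumerate the $P$-invariant codes $A^{\perp}$ of minimum weight $\ge 3$. Using the submodule structure of $\F_2^{\,7}$—composition factors of dimensions $1,3,3$ for $PSL(3,2)$, and submodule dimensions avoiding $2$ and $5$ for every transitive $P$—the only possibilities are $A^{\perp}\in\{0,\ \langle j\rangle,\ C_s,\ C_H\}$, with $j$ the all-ones vector, $C_s$ the $[7,3,4]$ simplex code and $C_H$ the $[7,4,3]$ Hamming code. Dualizing gives $A$ equal to the full diagonal group (order $128$), the even-weight code (order $64$), the Hamming code (order $16$), or the simplex code (order $8$); the last two require $P\le\Aut(C_H)=PSL(3,2)$.

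Next I would strip off $-I_7=j$: the group $A$ contains $j$ exactly when $A$ is the Hamming code or the full diagonal group, and in those two cases $A=(\text{base})\oplus\langle -I_7\rangle$ with base the simplex or the even-weight code. This is precisely the ``direct product with $Z_2$ by adjoining $-I_7$'' clause, and it reduces everything to the two base cases of order $8$ and $64$. For $A$ the even-weight code (Case $3$) every transitive subgroup of $Sym(7)$ occurs, giving the Sylow-$7$-normalizer groups of orders $7,14,21,42$ together with $PSL(3,2)$, $Alt(7)$, $Sym(7)$; transitivity forces $7\mid|P|$ and supplies the index-$7$ point stabilizer automatically. For $A$ the simplex code (Case $2$) one needs $P\le PSL(3,2)$; since an inversion of the $7$-cycle swaps the two $3$-dimensional constituents, a $3$-dimensional submodule survives only when no such inversion is present, so $P\in\{C_7,\ F_{21},\ PSL(3,2)\}$ of orders $7,\ 21$ and $168$ (in particular $D_7$ and $F_{42}$ are excluded, consistently with $PSL(3,2)$ having Sylow-$7$ normalizer $F_{21}$). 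Finally, in the Sylow-$7$-normalizer cases $A$ restricted to $C_7$ is $\F_8$ or $\F_8\oplus\F_8$, hence fixed-point-free, so $C_A(C_7)=0$ and the Sylow $7$-subgroup is self-centralizing.

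The remaining and hardest step is the extension problem: deciding split versus non-split through $H^2(P,A)$ and then counting the groups and their conjugacy classes. For $P=C_7$ or $F_{21}$ the order is prime to $2$, so $H^2(P,A)=0$ and these extensions split; likewise I expect all Case $3$ extensions to split, which I would confirm by restricting to a Sylow $2$-subgroup and using the known module structure. The single genuinely delicate computation is $H^2\big(PSL(3,2),A\big)$ with $A$ the natural $3$-dimensional module, which is non-zero and yields both a split and a non-split extension—this is the stated exception. Carrying out this cohomology, and verifying vanishing in all other cases, is the main obstacle; once it is settled, the number of $G$-conjugacy classes in each group follows by standard character-theoretic bookkeeping over the orbits of $P$ on the characters $\chi_i$.
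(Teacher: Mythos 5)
Your route is genuinely different from the paper's, and for the structural part it is arguably cleaner. The paper never fixes coordinates: it extracts $A$ from Clifford's theorem applied to a normal subgroup witnessing the failure of quasi-primitivity, proves $A$ is elementary abelian of exponent $2$ by a reality-of-eigenvalues argument, bounds $|A|$ to $8$ or $64$ via Ito's theorem together with fixed-point-freeness of the element of order $7$, and then reads the possible $G/A$ off the Atlas. Your replacement --- conjugate $G$ into $(\Z/2)\wr Sym(7)$, set $A=G\cap D$, and classify the $P$-invariant codes whose duals have minimum weight at least $3$ --- reaches the same list of pairs $(A,G/A)$ and in addition explains \emph{why} the orders $8$ and $64$ occur (simplex and even-weight codes) and why $D_7$ and $F_{42}$ disappear in Case $2$. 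One caution on this part: your ``Mackey iff'' is only half true. Distinct $\chi_i$ do give irreducibility, but irreducibility does not force the $\chi_i$ to be distinct --- the monomial embedding of $PSL(2,7)$ in $SO(7)$ has $G\cap D=1$ --- and Mackey's criterion requires disjointness on all of $G_1\cap G_1^{\,g}$, not merely on $A$. The direction you actually need (irreducible and non-quasi-primitive implies the $\chi_i$ are pairwise distinct) follows instead from Clifford's theorem and the primality of $7$, and it requires choosing the system of imprimitivity adapted to a normal subgroup with non-homogeneous restriction; for an arbitrary pre-chosen system, ``not quasi-primitive'' is not equivalent to ``$G\cap D\not\le\langle -I_7\rangle$''.

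The genuine gap is the splitting analysis, which is exactly where the paper does its real work and which you explicitly leave as ``the main obstacle.'' Moreover $H^2(P,A)$ is not quite the right invariant: the question is which extensions occur as irreducible imprimitive subgroups of $O(7)$, not which exist abstractly. The paper (i) exhibits an explicit Sylow $2$-complement over $A$ for $64\cdot 7!$ and invokes the Gasch\"utz theorem, inheriting the conclusion for $Alt(7)$ and $PSL(3,2)$; (ii) eliminates non-split groups of orders $64\cdot 14$ and $64\cdot 42$ not by cohomology but by showing that a coset representative $\tau$ with $\tau^2=a$ and $\eta(a)=-1$ would force the character value $\chi(\tau)=\pm i$, impossible for a real representation; and (iii) uses GAP to confirm that the non-split perfect group $2^3\cdot PSL(3,2)$ of order $1344$ really does carry two rational irreducible characters of degree $7$. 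Item (iii) is needed even after you compute that $H^2(PSL(3,2),A)\neq 0$: a nonzero class produces an abstract group, not yet a subgroup of $O(7)$ of the required kind, and conversely in the $64\cdot14$ and $64\cdot 42$ cases a cohomology computation at the Sylow $2$-subgroup would still have to be carried out or replaced by an embeddability argument. Until these items are supplied, the proposal establishes the candidate list but not the full statement of the theorem.
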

The numbers of conjugacy classes in each of these groups is given in Table $1$.

In section $2$ we give some general results and in section $3$ some preliminary results for $SO(7)$.  There is an Abelian group which enters the picture and in section $4$ we show it is of order
$8$ or $64$.  In section $5$ we give a general structure theorem which gives orders for the possible groups.  In section $6$ we give two examples which are relevant to
the work.  In section $7$ we describe a method to determine the numbers of conjugate classes.  In section $8$ we discuss which extensions are  split.  We end with section
$9$ where we give computations to obtain the numbers of conjugate classes.  These are tabulated in Table $1$.

\begin{section}{Some General Results}

We begin with a definition and immediate consequence for the group $O(n)$.  The group $O(n)$ is the group of all $n\times n$ matrices over the reals that are orthogonal.  This means for
a real matrix $A$ that $AA^{tr}=I$ where $A^{tr}$  is the transpose of the matrix A.  As $Det A=Det A^{tr}$ this means $DetA^2=1$ and so $Det A=\pm1$.  In particular there is a subgroup of $O(n)$ of index
2 called $SO(n)$ as there are orthogonal matrices of determinant $-1$ for all $n$.  For $n$ odd,  $-I_n$ is an orthogonal matrix with determinant $-1$  and
this means the subgroup $O(n)$ is the direct product $SO(n) \times \langle-I_n\rangle$.  As $7$ is odd, in considering subgroups
of $O(7)$ we  consider the subgroup $SO(7)$.

\begin{Lm}\label{index 2}
If $G$ is a subgroup of $O(7)$, then $G=\langle G_1, \langle -I_7 \rangle\rangle$ where $G_1$ is the subgroup of $G$ consisting of matrices of determinant $1$.  In particular, $G\cong G_1\times Z_2$.
\end{Lm}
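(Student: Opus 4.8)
The plan is to recognize this as an instance of the standard internal direct product criterion, with the central scalar $-I_7$ playing the role of the $Z_2$ factor. First I would record the two elementary facts about $-I_7$. Since $-I_7$ is a scalar matrix it commutes with every matrix in $O(7)$, so $\langle -I_7 \rangle$ is central, hence normal, in any subgroup that contains it. Since $7$ is odd, $\det(-I_7)=(-1)^7=-1$, so $-I_7$ has order $2$, $\langle -I_7\rangle \cong Z_2$, and $-I_7 \notin SO(7)$. Finally, $G_1 = G \cap SO(7)$ is the kernel of the homomorphism $\det \colon G \to \{\pm 1\}$, and is therefore a normal subgroup of $G$.

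Next I would assemble the direct product. Because $G_1 \subseteq SO(7)$ while $-I_7 \notin SO(7)$, we immediately get $G_1 \cap \langle -I_7\rangle = \{I_7\}$. Thus once we know $G = \langle G_1, -I_7\rangle = G_1 \cdot \langle -I_7\rangle$, the two subgroups $G_1$ and $\langle -I_7\rangle$ are both normal in $G$, meet trivially, and together generate $G$; the internal direct product criterion then yields $G = G_1 \times \langle -I_7\rangle \cong G_1 \times Z_2$, which is the desired conclusion.

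The one step that requires care, and the crux of the statement, is the equality $G = G_1 \cdot \langle -I_7\rangle$. By the First Isomorphism Theorem applied to $\det|_G$, the quotient $G/G_1$ is either trivial or isomorphic to $Z_2$, according to whether $G$ contains an element of determinant $-1$. When it does, I would show that the nontrivial coset is represented by the central scalar itself, i.e. that $-I_7 \in G$; then $G = G_1 \cup (-I_7)G_1 = G_1\cdot\langle -I_7\rangle$ and the argument closes. This is exactly the point where the hypothesis $-I_7 \in G$ enters (equivalently, the passage to $O(7)=SO(7)\times\langle -I_7\rangle$ and the adjoining of $-I_7$ described in the theorem statement): without it the claim can fail, as the case $G = SO(7)$ shows, where $\langle G_1, -I_7\rangle = O(7) \neq G$. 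I expect this identification of the determinant $-1$ coset of $G_1$ with the central scalar $-I_7$ to be the main obstacle, the remainder being the routine verification of the direct product criterion.
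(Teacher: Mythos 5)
Your proof is correct and is essentially the argument the paper intends: the lemma is stated in the paper without proof, as an immediate consequence of the decomposition $O(7)=SO(7)\times\langle -I_7\rangle$, and your verification of the internal direct product criterion (normality of $G_1$ as the kernel of $\det$, centrality of $-I_7$, trivial intersection since $\det(-I_7)=(-1)^7=-1$) is exactly the routine argument being suppressed. You are also right to single out $G=G_1\cdot\langle -I_7\rangle$ as the crux, which genuinely requires $-I_7\in G$: as literally stated the lemma fails for your example $G=SO(7)$, and more pointedly for any $G$ containing determinant $-1$ matrices but not the scalar $-I_7$; the paper leaves this hypothesis implicit in the phrase ``adjoining the matrix $-I_7$'' of Theorem \ref{Classif}, and your reading is the correct repair.
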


We continue  with a  well-known result about finite irreducible linear groups of odd degree.

\begin{Thm}\label{orthog}  Suppose $G$ is a finite  irreducible linear group over the complex numbers of odd degree, $n$, for which all the character values are real.   Then $G$ can be conjugated
by a matrix in $GL(n,C)$ into
a subgroup of   $O(n)$.  This means the matrices can be conjugated so they are all real and orthogonal.
\end{Thm}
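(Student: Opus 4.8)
The plan is to use Frobenius--Schur theory to convert the hypothesis that every character value is real into realizability over the reals, and then to average a positive-definite form to land inside the real orthogonal group. Let $\rho\colon G\to GL(n,\C)$ afford the given irreducible representation with character $\chi$, so by hypothesis $\chi(g)\in\mathbb{R}$ for all $g\in G$. First I would recall the Frobenius--Schur indicator $\nu(\chi)=\frac{1}{|G|}\sum_{g\in G}\chi(g^2)$, which for an irreducible $\chi$ equals $0$ precisely when $\chi$ is not real-valued, and otherwise takes the value $+1$ (orthogonal type) or $-1$ (symplectic type). Since $\chi$ is real-valued, the value $0$ is excluded, so $\nu(\chi)=\pm 1$.

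The key step is to rule out the value $-1$ by using the parity of the degree. When $\nu(\chi)=-1$ the representation is of symplectic type, meaning it preserves a nondegenerate $G$-invariant alternating (skew-symmetric) bilinear form $B$. But an alternating form on a space of odd dimension $n$ is necessarily degenerate: its Gram matrix $M$ satisfies $\det M=\det M^{\tr}=\det(-M)=(-1)^n\det M=-\det M$, forcing $\det M=0$. This contradicts nondegeneracy, so the symplectic alternative cannot occur and $\nu(\chi)=+1$.

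It then follows from the Frobenius--Schur theorem that $\nu(\chi)=+1$ implies $\rho$ is realizable over $\mathbb{R}$: there is $P\in GL(n,\C)$ with $P\rho(g)P^{-1}$ a real matrix for every $g$. Replacing $\rho$ by this conjugate, I may assume $G\subseteq GL(n,\mathbb{R})$. Finally, since $G$ is finite, averaging the standard inner product produces the positive-definite symmetric matrix $S=\sum_{g\in G}\rho(g)^{\tr}\rho(g)$, which is $G$-invariant in the sense that $\rho(h)^{\tr}S\rho(h)=S$ for all $h$. Writing $S=T^{\tr}T$ with $T=S^{1/2}$ and conjugating by $T$ sends each $\rho(g)$ to a real orthogonal matrix, so $G$ is conjugate into $O(n)$. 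The main obstacle is the input from Frobenius--Schur theory used in this last step---that indicator $+1$ genuinely yields a real form, rather than merely an invariant complex symmetric bilinear form landing the group in $O(n,\C)$---which is exactly where the realness of the character (as opposed to mere self-duality) is decisive; the odd degree serves only to eliminate the competing symplectic case.
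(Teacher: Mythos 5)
Your proof is correct, and it reaches the conclusion by a route that differs from the paper's in its key lemma. The paper argues via the Schur index: since the character is real-valued, the Schur index of $\chi$ over $\mathbb{R}$ is $1$ or $2$; if it were $2$ then $2$ would divide the degree by \cite[$11.5$]{F}, contradicting $n$ odd; hence the representation is realizable over $\mathbb{R}$, and a citation to \cite{Collins} converts a real representation into an orthogonal one. You instead use the Frobenius--Schur indicator: reality of $\chi$ forces $\nu(\chi)=\pm1$, and the symplectic case $\nu(\chi)=-1$ is excluded by the elementary observation that a nondegenerate alternating form cannot exist in odd dimension (the determinant computation $\det M=(-1)^n\det M$). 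The two arguments are close cousins --- for a real-valued irreducible character, Schur index $2$ over $\mathbb{R}$ and indicator $-1$ are equivalent, and both exclusions ultimately say ``quaternionic type forces even degree'' --- but yours is more self-contained: the determinant argument replaces the citation to Feit's divisibility theorem, and your explicit averaging of the form $S=\sum_g\rho(g)^{\tr}\rho(g)$ followed by conjugation by $S^{1/2}$ replaces the citation to Collins for the final step into $O(n)$. You are also right to flag the one place where care is needed: indicator $+1$ must be invoked as giving a genuine real form (the Frobenius--Schur theorem), not merely an invariant complex symmetric bilinear form, since the latter alone would only place $G$ in $O(n,\C)$. Both proofs are valid; yours trades brevity for transparency.
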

\begin{proof}  This uses the concept of Schur index as described in \cite[section $11$]{F}. By the hypothesis, the reals contain all the character values.  By the Schur index definition, there is a
degree $m$ extension of the reals over which the representation is realizable, and the minimal $m$ is called the Schur index.  The group can of course   be realized over the
complex numbers, a degree $2$ extension of the reals, $R$.  This means the Schur index over the reals is $1$ or $2$.  But if it is $2$, by \cite[$11.5$]{F},  the degree must be even as $2$ divides the degree.  However,
by assumption, the degree is odd and so it can be realized over the reals.  It then follows from \cite[section $8$ and Lemma $50$]{Collins}  that the matrices can be taken to be orthogonal.
\end{proof}

\end{section}

\begin{section}{Some Preliminary Results}
There are errors in \cite[Theorm $1$]{W} and \cite[Theorem $1$]{DW} in which the group $PSL(2,7) $ appears and is called primitive.  Indeed it is imprimitive.  The group is quasi-primitve but
not primitive.  Here quasi-primitive refers to a linear group in which the restriction to any normal subgroup has all irreducible constituents similar to one of them.  Here
there are no non-trivial normal subgroups and so it is quasi-primitive.  If a group is not quasi-primitive it is imprimitive by Clifford's Theorem, \cite[Theorem $2.3$]{Collins}.  Indeed
the classifications in \cite{W} and \cite{DW} are for  quasi-primitive irreducible finite  groups of degree $7$.  The group $PSL(2,7)$ is  induced from a non-trivial linear character of one
of the subgroups of order $24$ and index $7$ and so is imprimitive.  For the group $PGL(2,7)$ the  irreducible character of degree $7$ as listed in \cite{W} and \cite{DW} is indeed primitive as $PGL(2,7)$ does
not have a subgroup of index $7$ as the outer automorphism interchanges the two conjugate classes of subgroups of order $24$ in $PSL(2,7)$.

Suppose $G$ is a finite irreducible subgroup of the orthogonal group $SO(7)$ which is imprimitive but not quasi-primitive.  Let $X$ be the representation with character $\chi$.

There must be a normal Subgroup $A$ for which the constituents of $X$ restricted to $A$ are not
all isomorphic.  In this case by Clifford's theorem, there must be seven linear characters permuted transitively. Let $\lambda$ be one of them.   Using \cite[Corollary $24$]{Collins}  the restriction of
$X$ to $A$ is a set of diagonal matrices and so is Abelian.  By Frobenius reciprocity, \cite[Theorem $29$]{Collins},  $X$ is a constituent of $\lambda^G$.

\begin{Thm}\label{Aeven}  Suppose $G$ is an irreducible finite subgroup of $SO(7)$ which is imprimitive but  not quasi-primitive, then the subgroup $A$ described immediately above is an elementary
Abelian group of order $2^c$.  There is an element $\alpha$ in $G$ of order $7$ which acts non-trivially on $A$.
\end{Thm}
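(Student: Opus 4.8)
The plan is to derive both assertions from the single structural fact established just above the statement—that $X$ restricted to $A$ consists of diagonal matrices—combined with the reality of the representation. First I would observe that, since $G\le SO(7)$, each matrix $X(a)$ with $a\in A$ is real, orthogonal, and diagonal, so every diagonal entry equals $\pm 1$. Writing $X(a)=\operatorname{diag}(\lambda_1(a),\dots,\lambda_7(a))$, this forces every linear character $\lambda_i$ to take values in $\{\pm1\}$. Because $A\le SO(7)$ the restriction $X|_A$ is faithful, so $X(a)^2=I$ gives $a^2=1$ for all $a\in A$. Hence $A$ is abelian of exponent dividing $2$, i.e.\ elementary Abelian of order $2^c$, which is the first conclusion.

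Next I would pin down the eigenspace structure and identify the centralizer. By Clifford's theorem $X|_A=e(\lambda_1+\cdots+\lambda_t)$ with the $\lambda_i$ the distinct $G$-conjugates; since $et=7$ is prime and the constituents are not all isomorphic, necessarily $e=1$ and $t=7$, so the seven characters are pairwise distinct and the associated eigenspaces $V_1,\dots,V_7$ of $A$ are one-dimensional and span $R^7$. The key observation is that any $g\in C_G(A)$ centralizes $A$, hence fixes each $\lambda_i$ and therefore preserves every $V_i$; as each $V_i$ is a line, $g$ is diagonal in the adapted basis. Being a real orthogonal diagonal matrix, $g$ again satisfies $g^2=I$, so $C_G(A)$ is itself elementary Abelian of exponent $2$; in particular $7\nmid|C_G(A)|$. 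I would also note that $C_G(A)$ is exactly the kernel of the permutation action of $G$ on $\{V_1,\dots,V_7\}$, since the diagonal elements of $G$ are precisely those commuting with $A$.

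Finally I would produce $\alpha$. The permutation action yields a homomorphism $G\to\operatorname{Sym}(7)$ with image isomorphic to $G/C_G(A)$, which is transitive because the seven characters are permuted transitively, and hence has order divisible by $7$; thus $7\mid|G|$. Let $P$ be a Sylow $7$-subgroup of $G$. Its image is a Sylow $7$-subgroup of the transitive image, and since $7^2\nmid|\operatorname{Sym}(7)|$ this image has order exactly $7$. As $|C_G(A)|$ is prime to $7$, the permutation map is injective on $P$, so $|P|=7$ and any generator $\alpha$ of $P$ has order $7$ and lies outside $C_G(A)$, that is, acts nontrivially on $A$. The step I expect to be the genuine obstacle is the identification of $C_G(A)$ as a $2$-group: without it one knows only that $7\mid|G/C_G(A)|$, and an order-$7$ element of the quotient need not lift to an order-$7$ element outside $C_G(A)$ (a cyclic Sylow $7$-subgroup of larger order could swallow it). It is precisely the diagonality of $C_G(A)$, forced by the distinctness of the $\lambda_i$ and the reality of $X$, that makes the Sylow $7$-subgroup have order $7$ and delivers the required $\alpha$.
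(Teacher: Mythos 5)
Your proof is correct, but the two halves compare differently with the paper. The first half (that $A$ is elementary Abelian of order $2^c$) is essentially the paper's argument in streamlined form: the paper passes through a characteristic elementary Abelian $p$-subgroup and rules out $p>2$ and elements of order $4$ by noting that non-real roots of unity cannot occur as eigenvalues of a real diagonal matrix, while you simply observe that a real orthogonal diagonal matrix squares to the identity; both treatments quietly use that the seven linear constituents are real-valued (which one can justify by noting that complex conjugation permutes the $G$-orbit of seven distinct characters, a set of odd size, hence fixes one and therefore all of them). Where you genuinely diverge is in producing $\alpha$. The paper lifts a $7$-cycle of the permutation action to a monomial matrix, rescales the basis so that every nonzero entry except the $(7,1)$ entry equals $1$, and uses the determinant condition in $SO(7)$ to force that last entry to be $1$ as well, so the lift satisfies $\alpha^7=I$ on the nose; this is concrete and has the side benefit of exhibiting $\alpha$ as an honest permutation matrix, a form used in later sections. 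You instead identify the kernel of the permutation action on the seven lines as $C_G(A)$, show it consists of diagonal matrices and is therefore an elementary Abelian $2$-group, and then run a Sylow argument: a Sylow $7$-subgroup of $G$ meets $C_G(A)$ trivially and maps onto a Sylow $7$-subgroup of a transitive subgroup of $Sym(7)$, hence has order exactly $7$, and any generator lies outside $C_G(A)$. Your route is more structural and correctly isolates the genuine danger (a lift of an order-$7$ element of the quotient could a priori have order divisible by $2$); the paper's route is shorter and delivers the explicit matrix form of $\alpha$ that the rest of the paper relies on.
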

\begin{proof}  Let $B$ be a characteristic elementary Abelian subgroup of order $p^b$ for some prime $p$ of the group $A$.  Recall $X$ has real entries.  By Clifford's theorem,
the restriction to $B$ is a direct sum of linear spaces.  The diagonal entries are all real and $p$-th roots of $1$.  But for any $p>2$ this is impossible
as  non-trivial $p$-th roots have degree at least $2$ over the reals.  This shows $p=2$.
  The same argument as above shows if there is an element of order $4$, the irreducible constituents have degree at least $2$ as $i$ is not in the reals.  This means $A$ is an
elementary Abelian $2$-group.

There is an element, $\alpha$ which permutes the seven one dimensional spaces.  Fix one space and take the seven images of it by $\alpha ^i$ for $1\leq i \leq 7$.  Now the matrix of $\alpha$ with respect to this basis
has all coefficients $1$
except for the $(7,1)$ entry.  Because the group is in $SO(7)$, the final entry is $1$ as well.  This means $\alpha$ has order $7$.  It acts non-trivially by conjugation on $A$.

\end{proof}

\end{section}

\begin{section}{A bound for the Abelian Subgroup}
As in the last section we assume $G$ is a finite irreducible subgroup of $SO(7)$ which is imprimitive but not quasi-primitive.  As in Theorem \ref{Aeven} let $A$ be the Abelian subgroup.
\begin{Thm}\label{8 or 64}  The Abelian subgroup, $A$,  of $G$ has order $8$ or $64$.  In each case,  the element of order $7$ acts fixed point freely, ie it does not centralize any element of order $2$.
\end{Thm}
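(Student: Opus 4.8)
The plan is to pass to the basis given by the seven one-dimensional spaces, in which (by Theorem~\ref{Aeven}) $A$ consists of diagonal $\pm 1$ matrices and $\alpha$ is the permutation matrix of a single $7$-cycle, so that conjugation by $\alpha$ cyclically shifts the diagonal entries. Writing each element of $A$ as $\mathrm{diag}(\eps_1,\dots,\eps_7)$ and recording the signs, I identify $A$ with an $\F_2$-subspace of $\{\pm 1\}^7$; hence $|A|=2^c$ with $c\le 7$, and conjugation makes $A$ a module over $\F_2[\langle\alpha\rangle]$. Both assertions of the theorem will come out of this module structure, and in fact they are equivalent: the order being $8$ or $64$ is the same as the absence of a trivial $\alpha$-summand, which is fixed-point-freeness.

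First I would establish the fixed-point-free action. Suppose $a=\mathrm{diag}(\eps_1,\dots,\eps_7)\in A$ is centralized by $\alpha$. Since conjugation by $\alpha$ permutes the diagonal entries through one $7$-cycle, invariance forces all seven $\eps_i$ to coincide, so $a=\pm I_7$. But $\det(-I_7)=(-1)^7=-1$, so $-I_7$ lies in $O(7)\setminus SO(7)$ and therefore not in $A\subseteq SO(7)$; hence $a=I_7$. Thus $C_A(\alpha)=1$, i.e.\ $\alpha$ centralizes no involution. This step is the crux, and it is exactly where $G\le SO(7)$ is used: without the determinant-one condition the involution $-I_7$ would be an $\alpha$-fixed point and the whole conclusion would collapse.

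Finally I would read off the order. As $\gcd(2,7)=1$, Maschke's theorem makes $\F_2[\langle\alpha\rangle]$ semisimple, so $A$ is a direct sum of irreducible modules; the trivial summand is precisely the fixed space $C_A(\alpha)$, which vanishes by the previous step, so every summand is nontrivial. Each nontrivial irreducible has $\F_2$-dimension equal to the multiplicative order of $2$ modulo $7$, namely $3$ (indeed $t^7-1=(t-1)(t^3+t+1)(t^3+t^2+1)$ over $\F_2$, the two cubics being irreducible). Hence $c$ is a positive multiple of $3$ with $c\le 7$, forcing $c=3$ or $c=6$, that is $|A|=8$ or $64$. The only point needing care beyond this bookkeeping is the input from Theorem~\ref{Aeven} that $\alpha$ genuinely acts as one $7$-cycle on the seven spaces, which is what makes the sole $\alpha$-fixed diagonal matrices the scalars $\pm I_7$; the order-of-$2$ computation and the factorization of $t^7-1$ are routine. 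I expect no further obstacle, since the determinant condition simultaneously yields fixed-point-freeness and rules out the module dimensions $c=4$ and $c=7$ that a surviving trivial constituent would produce.
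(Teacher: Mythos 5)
Your proof is correct, and it reaches both conclusions by a noticeably more elementary route than the paper. The paper restricts to $\langle A,\alpha\rangle$, invokes Ito's theorem to see that the seven-dimensional representation remains irreducible on that subgroup, and then rules out an element of order $14$ by noting that the resulting central involution would have to be the scalar $-I_7$, which is not unimodular; fixed-point-freeness falls out of that. You obtain fixed-point-freeness directly from the matrices: an $\alpha$-invariant diagonal $\pm1$ matrix must have constant diagonal, hence equal $\pm I_7$, and $-I_7\notin SO(7)$. Both arguments bottom out in the same fact, $\det(-I_7)=-1$, but yours bypasses Ito's theorem and Schur's lemma entirely. More significantly, for the step from fixed-point-freeness to $|A|\in\{8,64\}$ the paper simply asserts the conclusion, whereas you supply the justification via Maschke's theorem and the factorization $t^7-1=(t-1)(t^3+t+1)(t^3+t^2+1)$ over $\F_2$, so that every nontrivial irreducible $\F_2\langle\alpha\rangle$-summand of $A$ has dimension $3$ and $c\in\{3,6\}$; this is exactly the standard argument the paper tacitly relies on, and writing it out is an improvement. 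The one small point worth adding is why $c>0$, i.e.\ why $A\neq 1$: this is immediate from Theorem~\ref{Aeven}, since $\alpha$ acts nontrivially on $A$ (indeed $A$ was chosen so that $X|_A$ has non-isomorphic constituents).
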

\begin{proof}  The maximum order of an Abelian group of $7\times 7 $ diagonal matrices with $\pm 1$ on the diagonal is $2^7$ and if the determinant is $1$ it is $2^6$.  If $A$, the Abelian subgroup, has order $2^a$ with $a\leq 6$ and $\alpha$ is an element of order $7$, restrict to
$<A,\alpha>$ where $\alpha $ is the element of order $7$ which does not centralize $A$.  This is a group with an Abelian normal subgroup of index $7$ and so by Ito's Theorem, \cite[53.18]{CR}, all irreducible representations have degree dividing $7$.  As it is non Abelian, this means
the restriction to $\langle A, \alpha \rangle$ is irreducible.  Suppose there is an element of order $14$.  If so, an element of order $2$ centralizes $A$ as $A$ is Abelian and also centralizes $\alpha$ and so is in the center.  But this means
 it is the scalar $-I_7$.  But it is unimodular and so this is false.  This means  the order is $8\cdot 7$ or $64\cdot 7$ and in both cases the element $\alpha$ of order $7$ acts fixed point freely.
\end{proof}
\end{section}

\begin{section}{Structure Theorem}

In this section we determine the preliminary structure for a finite imprimitive irreducible subgroup, $G$,  of $SO(7).$

\begin{Thm}\label{Structure Theorem}  Suppose $G$ is a finite irreducible imprimitive subgroup of $SO(7)$.  Then $G$ is one of the groups listed in
Theorem \ref{Classif} except when $G/A$ is even, the extension need not split in that it need not be a semi-direct product of $A$ by $G/A$.

\end{Thm}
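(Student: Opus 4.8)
The plan is to fix the monomial form supplied by imprimitivity and then split into the quasi-primitive and non-quasi-primitive cases. Choosing a unit vector on each of the seven permuted lines gives an orthonormal basis in which every element of $G$ is a $\pm1$ monomial matrix of determinant $1$; write $D$ for the diagonal subgroup and $\pi\colon G\to\operatorname{Sym}(7)$ for the permutation action on the seven lines, and set $A:=\ker\pi=G\cap D$. If $G$ is quasi-primitive it has no nontrivial normal subgroup with non-isomorphic constituents, and \cite[Theorem $1$]{W} (equivalently \cite[Theorem $1$]{DW}) forces $G\cong PSL(2,7)$, which is Case $1$; so I assume henceforth that $G$ is not quasi-primitive.

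Under this assumption, non-quasi-primitivity yields, via Clifford's theorem and \cite[Corollary $24$]{Collins}, a nontrivial normal subgroup acting diagonally in the monomial basis, so $A=G\cap D\neq1$. Thus $A$ is a nontrivial normal elementary abelian $2$-group stable under the order-$7$ element $\alpha$ of Theorem \ref{Aeven}; regarding it as a module for $\langle\alpha\rangle$ over $\F_2$, Theorem \ref{8 or 64} gives $|A|\in\{8,64\}$ with $\alpha$ acting fixed-point-freely. The key point I would next establish is that $A$ is self-centralizing. By Clifford's theorem $X|_A$ splits as seven linear characters; transitivity of $\pi$ forces these to be either all equal or all distinct, and all equal would make $A$ a group of scalars, hence trivial in $SO(7)$. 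So the seven characters are distinct, the seven lines are precisely the distinct common eigenspaces of $A$, and any element centralizing $A$ must stabilize each line and so be diagonal. Therefore $C_G(A)=G\cap D=A$.

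Self-centrality gives a faithful action $G/A=G/C_G(A)\hookrightarrow\Aut(A)$. For $|A|=64$ I would use $\pi$ itself, whose kernel is $A$, to embed $G/A$ as a transitive subgroup of $\operatorname{Sym}(7)$ containing the $7$-cycle $\pi(\alpha)$; for $|A|=8$ one has $\Aut(A)=GL(3,2)\cong PSL(2,7)$, and since the seven lines are identified with the seven nontrivial characters of $A\cong\F_2^3$ (the trivial character cannot occur, as its $G$-orbit would be a single point), the image of $\pi$ lies inside the natural action of $GL(3,2)$ on those seven points. In both cases $G/A$ is a transitive subgroup of $\operatorname{Sym}(7)$ with a $7$-cycle, and so it has a point stabilizer of index $7$. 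The enumeration then proceeds by noting that a $7$-cycle generates a Sylow $7$-subgroup, which either is normal in $G/A$—placing $G/A$ inside $N_{\operatorname{Sym}(7)}(C_7)$, the Frobenius group of order $42$, and yielding $C_7$, the dihedral group of order $14$, and the Frobenius groups of orders $21$ and $42$—or is not normal, leaving the $2$-transitive groups $GL(3,2)$, $\operatorname{Alt}(7)$, $\operatorname{Sym}(7)$. Intersecting with $GL(3,2)$ in the order-$8$ case, where $N_{GL(3,2)}(C_7)$ is the Frobenius group of order $21$ (as $GL(3,2)$ is simple with eight Sylow $7$-subgroups), leaves exactly $C_7$, the Frobenius group of order $21$, and $GL(3,2)$. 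These reproduce the lists of Cases $2$ and $3$.

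It remains to address splitting of $1\to A\to G\to G/A\to 1$. When $|G/A|$ is odd—precisely when $G/A$ is $C_7$ or the Frobenius group of order $21$—we have $\gcd(|A|,|G/A|)=1$, so Schur–Zassenhaus produces a complement and the extension splits; this is all the present theorem asserts, which is exactly why the even cases are excepted. The main obstacle is these even cases: Schur–Zassenhaus no longer applies, and a genuine non-split extension can occur (for $A$ of order $8$ extended by $GL(3,2)$), so deciding splitness requires computing the relevant second cohomology and exhibiting explicit cocycles. That analysis is deferred to Section $8$; here I establish only the isomorphism types of $A$ and of $G/A$ together with splitting in the coprime cases.
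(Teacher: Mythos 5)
Your proof is correct and follows the same skeleton as the paper's: dispose of the quasi-primitive case by citing \cite{W} and \cite{DW}, then identify $G/A$ as a subgroup of $PSL(3,2)$ (when $|A|=8$) or of $Sym(7)$ (when $|A|=64$) possessing a subgroup of index $7$, and enumerate the possibilities. Where you genuinely diverge is in supplying the two justifications the paper leaves implicit. First, you prove $C_G(A)=A$ from the distinctness of the seven linear constituents of $\chi|_A$ (distinctness being forced because $7$ is prime and a scalar $A$ is impossible in $SO(7)$); this is precisely what legitimizes the embedding $G/A\hookrightarrow \Aut(A)\cong GL(3,2)$ in the order-$8$ case, which the paper simply asserts. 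Second, you replace the paper's Atlas lookup by the classical dichotomy for transitive groups of prime degree: either the Sylow $7$-subgroup is normal, confining $G/A$ to the Frobenius group of order $42$ and its overgroups of $C_7$, or the group is $2$-transitive, hence $PSL(3,2)$, $Alt(7)$ or $Sym(7)$; intersecting with $GL(3,2)$, whose Sylow $7$-normalizer has order $21$, then yields the shorter list of Case $2$. Your version is more self-contained and makes transparent why ``has a subgroup of index $7$'' is the right condition (it is transitivity of the point action together with the presence of the $7$-cycle $\pi(\alpha)$), at the cost of invoking Burnside's theorem on groups of prime degree; the paper's version is shorter but outsources all of this to \cite{Atlas}. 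Your closing Schur--Zassenhaus observation for odd $|G/A|$ is consistent with the statement and anticipates the remark the paper only makes in Section $8$.
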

\begin{proof}
If the representation is quasi-primitive it is the group $G$ as in \cite[Theorem $1$]{W} or \cite[Theorem $1$]{DW}.

In case $2$, $G/A$ must be a subgroup of $PSL(3,2)$.   The only subgroups of $PL(3,2)$ with a subgroup of index  $7$ are the whole automorphism group, $PSL(3,2)$, the Sylow $7$ group, and its normalizer of order $21$ which is nonabelian. This can
be determined by using the Atlas,  \cite[page $3$]{Atlas}.

In case $3$ the quotient by $A$ is a subgroup of $Sym(7) $ with a subgroup of index $7$.  The only such subgroups are the ones listed again by consulting \cite[page 10]{Atlas}.
\end{proof}

\begin{Cor}\label{trace rational}  If $G$ is a finite imprimitive irreducible subgroup of $O(7)$, the traces are all integers.

\end{Cor}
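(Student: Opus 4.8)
The plan is to show that, in a suitable (not necessarily orthonormal) basis adapted to the system of imprimitivity, every element of $G$ is represented by a \emph{signed permutation matrix}, i.e. a monomial matrix all of whose nonzero entries are $\pm 1$. The trace of such a matrix is an integer, and since the trace of a linear map is independent of the chosen basis, this immediately yields $\chi(g)=\tr X(g)\in\Z$ for all $g\in G$. As explained in the introduction, because $G$ is imprimitive and $7$ is prime there is a system of imprimitivity consisting of seven one-dimensional subspaces $L_1,\dots,L_7$ of $\mathbb{R}^7$ which are linearly independent, span $\mathbb{R}^7$, and are permuted by $G$ (transitively, by irreducibility).

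First I would fix a unit vector $e_i\in L_i$ for each $i$; since the $L_i$ are linearly independent and span, the vectors $e_1,\dots,e_7$ form a basis of $\mathbb{R}^7$. For $g\in G$ let $\pi_g$ be the permutation of $\{1,\dots,7\}$ defined by $gL_i=L_{\pi_g(i)}$. Then $ge_i$ lies on the line $L_{\pi_g(i)}$, so $ge_i=c_{g,i}\,e_{\pi_g(i)}$ for some real scalar $c_{g,i}$ (real because every vector in sight is real). Because $G\subseteq O(7)$ the map $g$ is orthogonal, so $\|ge_i\|=\|e_i\|=1$; hence $|c_{g,i}|=1$ and therefore $c_{g,i}=\pm1$. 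Thus in the basis $e_1,\dots,e_7$ each column of the matrix of $g$ has a single nonzero entry, equal to $\pm1$, and the matrix is a signed permutation matrix. Its trace is $\sum_{i:\,\pi_g(i)=i}c_{g,i}$, the signed number of axes fixed by $g$, which is an integer; as the trace is basis-independent, $\chi(g)\in\Z$.

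The argument is uniform across all the cases of Theorem \ref{Classif}, and indeed works directly for $O(7)$ rather than only $SO(7)$: it uses only imprimitivity, to produce the seven lines, together with the reality of the representation, to pin the scalars down to $\pm1$. In particular it also covers the quasi-primitive group $PSL(2,7)$, whose degree-$7$ representation is itself monomial, being induced from a subgroup of index $7$. The one point requiring care is precisely the passage from ``monomial with entries of modulus $1$'' to ``entries exactly $\pm1$'': a priori the $c_{g,i}$ could be arbitrary roots of unity, and it is the hypothesis that the representation is real (equivalently, orthogonal) that forces them to be real, hence $\pm1$. One could instead observe that each $\chi(g)$, being a sum of seven roots of unity, is an algebraic integer, and then prove rationality by checking that $g$ and $g^k$ have equal character values for every $k$ coprime to the order of $g$; but this would require a conjugacy analysis in each group, whereas the signed-permutation realization makes the integrality transparent and case-free.
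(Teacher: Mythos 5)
Your proof is correct, and it reaches the conclusion by a more elementary and self-contained route than the paper does. The paper's proof first restricts to $SO(7)$ via Lemma \ref{index 2} and then invokes the classification in Theorem \ref{Structure Theorem}, asserting that all the groups listed there consist of monomial matrices with entries $\pm 1$; the integrality of the traces then follows, with the extra generator $-I_7$ handled separately. You instead derive the signed-permutation form directly from the hypotheses: imprimitivity (plus $7$ being prime) gives the seven lines, and orthogonality of $G$ together with the reality of the scalars forces the monomial entries to be $\pm 1$, after which basis-independence of the trace finishes the argument. This buys several things: you never need the classification (so the corollary is available before, and independently of, the structure theorem), you treat $O(7)$ directly rather than reducing to $SO(7)$, and you cover the quasi-primitive case $PSL(2,7)$ uniformly, whereas the paper's phrase ``they are all monomial matrices with entries $\pm 1$'' for that case tacitly requires knowing that the degree-$7$ representation is induced from a linear character of $Sym(4)$ taking values $\pm 1$. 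The one point you rightly flag --- that the basis $e_1,\dots,e_7$ need not be orthonormal --- is harmless, since the norm computation uses only that $g$ is orthogonal for the standard inner product, and the trace is basis-independent. The paper's approach, for its part, fits the corollary into the classification framework it has already built, but as a proof of this particular statement yours is the cleaner one.
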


\begin{proof}  By restricting to $SO(7)$ the group is one of the ones listed in Theorem \ref{Structure Theorem}.  For these the traces are all integers as they are all monomial
matrices with entries $\pm 1$ and so the traces are all integers.  If $G$ is in $O(7)$ use the fact that $G=G_1 \langle -I_7\rangle$ by Lemma \ref{index 2}.
\end{proof}

\end{section}

\begin{section}{ Examples.}

\begin{Example}  Let $A$ be an elementary Abelian group of order $8$ and let $\alpha$ be an automorphism of $A$ of order $7$.  Notice  the  automorphism group
of $A$ is $PSL(3,2)$ which has order $7\cdot 3\cdot 8$ so $\alpha $ can be taken as any automorphism of $A$ of order $7$.  Let $G$ be the semi-direct product of $A$ by $\langle \alpha \rangle$.  Then
$|G|=8\cdot 7$.  Let $\chi$ be any non-trivial linear character of $A$ with representation $X$.  Then $\chi^G$ is an irreducible character as by Ito's Theorem the irreducible
character degrees are $1$ and $7$.  As the group is non Abelian, $\chi^G$ must be irreducible.  The matrices are all monomial
matrices.  An element of order $7$ acts as a seven cycle on the spaces and so the character value is $0$.  As $G/A$ is a cyclic group of order $7$, there are $7$ irreducible linear characters
on which the value on an element $\alpha$ of order $7$ is $1$.  Now column orthogonality forces the value of $\chi$ on an element of order $2$ to be $-1$ and shows again
that the value on $\alpha$ is $0$.  The sum of the squares
of the degrees so far is $56$ and so these are all the characters.  By Theorem \ref{orthog}, the matrices can be taken to be orthogonal and so in $O(7)$.
\end{Example}

\begin{Example} Let $A$ be either the elementary Abelian group of order $8$ or $ 64$ and let $\alpha$ be a non-trivial automorphism of order $7$ which does not
centralize any involution in $A$.  It is straightforward that there is such an automorphism as the automorphism group of an elementary Abelian group of order
$8$ contains such an element and in the case of $64$ a direct sum of two such groups can be taken.  Let $G$ be the semidirect product of $A$ by $\langle \alpha \rangle$.  This group
is non Abelian.  The induction of any non-trivial character of $A$ to $G$ is irreducible of degree $7$ and is in the reals and so conjugate to a subgroup of $O(7)$.
The irreducibility follows from Ito's theorem which has been used above and shows the irreducible representations have degree $1$ or $7$.  There are $7$ characters
of degree $1$ as $G/A$ has order $7$.  All other characters are of degree $7$ as the commutator subgroup is $A$ as the element of order $7$ acts fixed point
freely.  The constituents of the induced character cannot be trivial when restricted to $A$ by Frobenius reciprocity.

\end{Example}

\medskip
\noindent Consequence.  The groups appearing in Theorem \ref{Structure Theorem} all have irreducible representations of degree $7$ which are orthogonal.  This is because each
of the groups contains one of the groups  $A\cdot \langle \alpha \rangle$ and so the induced representations are all irreducible of degree $7$.  We can also determine the
irreducible character degrees.

\end{section}

\begin{section}{Numbers of Conjugate classes  of the groups.}

In this section and the next two we determine the numbers of conjugacy classes of each of the non quasi-primitive groups tabulated in Cases $2$ and $3$ of Theorem \ref{Classif}.  In fact we determine
the number of irreducible representations as this is the number of conjugate classes as these numbers are the same.  See \cite[Theorem $21$]{Collins}.  These are tabulated in Table~$1$.

For each of the groups listed in Theorem \ref{Structure Theorem} Case $2$ or Case $3$ there is an elementary Abelian subgroup of order $8$ or $64$.
There is a certain set of characters for $G$ which have $A$ in the kernel.  The number of these is the number of irreducible characters of $G/A$.  This number is readily calculated from $G/A$ possibly using the Atlas, \cite{Atlas}.

\begin{Def}\label{NFC}  Let  $NFC$  be the number of irreducible characters of $G/A$.
\end{Def}
\noindent  Here $NFC$ can be thought of as non faithful characters.

There is another class of characters which are faithful on $A$.

\begin{Def}\label{FC}  We let $FC$ be the number of characters which are faithful on $A$.
\end{Def}
\noindent  Here $FC$ can be thought of as faithful characters.

The  characters faithful on $A$ all have associated with them  seven  non-trivial linear characters of $A$ all conjugate by the element $\alpha$ of order seven.  Each is associated with a hyperplane of $A$ considered as a $GF(2)$ module.  For $A$ of order
$8$ there is one orbit of hyperplanes transitively permuted by $\alpha$.  For the group $A$ of order $64$ there are nine classes of hyperplanes each transitively permuted
by $\alpha$.

If $\chi$ is an irreducible character of $G$ which does not have $A$ in the kernel, by \cite[$9.11$]{F} the restriction to $A$ is a sum of seven nontrivial characters of $A$ permuted
transitively by $\alpha$.  Let $\eta$ be one of the non-trivial linear characters of $\chi | A$.

For fixed $\eta$ there is an inertial group  $I_\eta$ which fixes it.  This is the set of all $G$ in the group so that $\eta^G=\eta$.  The subgroup $I_\eta$ which
 stabilizes $\eta$,  is a subgroup of $G$ of index $7$ in our case.  Let $\theta$ be an irreducible character of $\chi |I_\eta$ for which $\theta | A$ contains $\eta $ as a constituent.   By \cite[$9.8$]{F} or \cite[Ex. $15$ page $69$]{Collins}, $\chi$
is $\theta^G$.
This uses the Mackey decomposition theorem which among other things gives a formula for $(\theta^G, \theta^G)$.  See
\cite[$9.8$]{F} or \cite[Ex. $15$ page $69$]{Collins}.
  If $y$ is in $I_\eta$, $y$ must fix the hyperplane, $K$, of $A$ which is the kernel for $\eta$.  For a fixed  $\eta$ we need to count the number of irreducibles of $I_\eta/K$ which represent $A/K$ by $-1$ where again  $K$ is the kernel of $\eta$.  If this number is $\gamma$, the number of characters, $\chi$ of $G$ which have $\eta$ as one of the constituents of $\chi |A$ is $\gamma$.  This
gives a formula for $FC$.  In the case of $A=64$, there are nine conjugacy classes of such $\eta$ and so $9\cdot \gamma$  such irreducibles.     For
the case of order $8$ there are $\gamma$.  The same characters are attained no matter which of the $\alpha$ conjugates of $\eta$ are taken, so we just take one such $\eta$.

\begin{Theorem}\label {Conjugate Classes}  If $G$ is one of the extensions in Theorem \ref{Structure Theorem} the  number of conjugate classes is $NFC + FC$.
\end{Theorem}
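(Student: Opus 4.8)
\noindent The plan is to exploit the standard equality between the number of conjugacy classes and the number of irreducible characters, \cite[Theorem $21$]{Collins}, and then to partition $\mathrm{Irr}(G)$ according to whether or not $A$ lies in the kernel. First I would observe that this dichotomy is both exhaustive and disjoint: every irreducible character $\chi$ of $G$ either satisfies $A \subseteq \ker\chi$ or it does not, and these two possibilities are mutually exclusive. The theorem then reduces to showing that the first family has size $NFC$ and the second has size $FC$, after which the count of conjugacy classes is simply the sum $NFC + FC$.

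For the first family I would use the standard inflation correspondence: the irreducible characters $\chi$ of $G$ with $A \subseteq \ker\chi$ are in natural bijection with the irreducible characters of the quotient $G/A$, each such $\chi$ being the inflation of a unique character of $G/A$. By Definition \ref{NFC} the number of these is exactly $NFC$, so this family is accounted for immediately.

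For the second family, those $\chi$ with $A \not\subseteq \ker\chi$, I would invoke the Clifford-theoretic analysis already assembled before the statement. By Clifford's theorem the restriction $\chi|A$ is a sum of nontrivial linear characters forming a single $G$-orbit, which by the fixed-point-free action of $\alpha$ (Theorem \ref{8 or 64}) coincides with an $\alpha$-orbit of size seven, so that the inertia group $I_\eta$ of a chosen constituent $\eta$ has index $7$. The Clifford correspondence \cite[$9.8$]{F} then makes $\chi \mapsto \theta$ a bijection between the irreducible characters of $G$ lying over $\eta$ and the irreducible characters $\theta$ of $I_\eta$ lying over $\eta$, with $\chi = \theta^G$. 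Counting the latter via the parameter $\gamma$, the number of irreducibles of $I_\eta/K$ representing $A/K$ by $-1$, and summing over the one orbit of admissible $\eta$ when $|A| = 8$ or the nine orbits when $|A| = 64$, yields exactly $FC$ as in Definition \ref{FC}.

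The step I expect to require the most care is verifying that the second family is counted exactly once each, that is, that the assignment $(\text{$\alpha$-orbit of }\eta,\ \theta) \mapsto \theta^G$ is a genuine bijection onto $\{\chi \in \mathrm{Irr}(G) : A \not\subseteq \ker\chi\}$. Completeness follows because every such $\chi$ has some nontrivial $\eta$ as a constituent of $\chi|A$; the absence of overcounting across distinct orbits follows because a given $\chi$ determines its $G$-orbit of $A$-constituents uniquely, so characters arising from different $\alpha$-orbits of $\eta$ are distinct, while characters arising from different $\eta$ within one orbit coincide, so a single representative per orbit suffices. Granting this, the two families partition $\mathrm{Irr}(G)$ into $NFC$ and $FC$ characters, and the number of conjugacy classes is $NFC + FC$.
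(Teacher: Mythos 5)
Your overall strategy --- equating conjugacy classes with irreducible characters, splitting $\mathrm{Irr}(G)$ according to whether $A$ lies in the kernel, handling the first family by inflation from $G/A$ and the second by the Clifford correspondence --- is exactly the route the paper takes (the paper's ``proof'' is the discussion preceding the statement). The inflation half and all of Case $2$ are sound: when $|A|=8$ the group $A$ has only seven nontrivial linear characters, they form a single orbit, and $I_\eta$ really has index $7$. The gap is in Case $3$. You assert that the $G$-orbit of a constituent $\eta$ of $\chi|_A$ ``coincides with an $\alpha$-orbit of size seven'' by the fixed-point-freeness of $\alpha$; but fixed-point-freeness only shows that each $\alpha$-orbit has size $7$, and does not bound the full $G$-orbit, whose size is $[G:I_\eta]$ and can be strictly larger once $G/A$ properly contains $\langle\alpha\rangle$. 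Concretely, for $G$ of order $64\cdot 7!$ the quotient $G/A\cong Sym(7)$ has orbits of sizes $7$, $21$ and $35$ on the $63$ nontrivial linear characters of $A$ (these correspond to subsets of $\{1,\dots,7\}$ of sizes $1$, $2$, $3$ taken modulo complementation), so $I_\eta$ usually has index $21$ or $35$. Then (i) $\chi|_A$ has $21$ or $35$ distinct constituents, not seven; (ii) such a $\chi$ lies over several distinct $\alpha$-orbits at once, so summing $\gamma$ over the nine $\alpha$-orbits overcounts it; and (iii) $\gamma$ itself is computed with the wrong inertia group. This is not merely presentational: carrying out the Clifford count with the correct inertia groups gives $15+11+14+15=55$ classes for this group (it is the Weyl group of type $D_7$, whose class number is indeed $55$), not $114$. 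The same defect affects every Case $3$ row with $G/A$ larger than $\langle\alpha\rangle$. You inherit the step from the paper, but the justification you supply for it is a non sequitur, so the argument as written fails.

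A smaller point: your opening dichotomy is ``$A\subseteq\ker\chi$'' versus its negation, whereas Definition \ref{FC} counts characters \emph{faithful} on $A$, and these are not the same when $|A|=64$. For the semidirect product $A\cdot\langle\alpha\rangle$ of order $448$, $A$ decomposes as $M\oplus N$ with $M$, $N$ the two nonisomorphic irreducible $3$-dimensional $\alpha$-modules, and the degree-$7$ characters inflated from $G/M$ and $G/N$ are nontrivial but not faithful on $A$; reading $FC$ literally gives $7+7=14$ rather than the correct $16$. You should state explicitly that you are replacing ``faithful on $A$'' by ``does not contain $A$ in its kernel'' (as the paper's own table tacitly does), since otherwise even the exhaustiveness of your partition fails.
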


We have listed  these numbers for the extensions in Table $1$.

\end{section}

\begin{section}{Some non split cases}

Suppose $G$ is the group of order $64\cdot 7!$.  Here $A$ has order $64$ and is all unimodular diagonal matrices with entries $\pm1$.  Here $G/A \cong Sym(7)$.  We show this is a split extension of $A$ by $Sym(7)$.
If $G/A$ were non-split we use the  Gasch\"utz Theorem, \cite[$10.4$]{Asch} which says it is not split for a Sylow $2$-group.  However, we show that it is split for  a Sylow $2$-group.  Let $S$ be a Sylow $2$-group of $Sym(7)$ which has order $16$.
It can be taken to be the group generated by permutations $(12)(34)$, $(23)(56)$, $(56)$.  This group is isomorphic to $ Z_2\times Sym(4)$.  Consider these permutations  as $7$ by $7$ permutation matrices.  Except for
$(56)$ these are in $G$.  If we replace the permutation matrix for $(56)$ with its negative, it is unimodular and so is in $G$.  Now the group generated by these matrices with the matrix for $(56)$ replaced
with its negative, are in $G$ is also isomorphic to $Z_2\times Sym(4)$.  This means
 it  is a Sylow $2$-group for $G/A$.  This means the Sylow $2$-group splits over $A$ and
so $G$ is a split extension of $A$ by the Gasch\"utz Theorem .

The group of order $64\cdot {7! \over 2}$ is a subgroup of index $2$ and so it is split as well.  The same argument applies to the group of order $64\cdot 168$ as it is also a subgroup and has a Slow
subgroup of the same order, $8$.  This means
it also splits.  Here  the Sylow $2$-subgroups can be taken to be the same group of order $8$.

We next consider the extension of the elementary Abelian group $A$ of order $8$ by $PSL(3,2)$.  We use the computer algebra program GAP, \cite{GAP} to treat this case to show there is a split and a non-split
group.  The properties we have obtained so far show the possible groups are perfect and of order $1344$.  By using standard commands in GAP it can be determined there are only two such groups.  One
is a split extension and the other is a non-split extension.  Furthermore, there is a transitive permutation representation of degree $14$ for both of them.  The character tables for both groups
are given in GAP and they each have two irreducible representations of degree $7$ with integral characters.

These arguments show the remaining possibilities for a non-split extension are of order $64\cdot 7\cdot2$ and $64\cdot 7\cdot 42$.  Recall if $G/A$ is odd the groups split by elementary arguments or
certainly by the Gasch\"utz theorem.

As above, let $\eta$ be a linear character of $A$ with
kernel $K$.  Also as above, $I_{\eta} $ is a subgroup of index $7$ of $G$.
This means $I_\eta$ contains a Sylow $2$-subgroup of $G$.   As $G/A$ is non split, by the Gasch\"utz Theorem, \cite[$10.4$]{Asch}, $I_\eta$  is a non split extension of $A$.

Assume first $|G|$ has order $64\cdot 7\cdot 2$.  Here $G/A$ is a nonabelian group of order $14$.  As $G/A$ does not split from $A$, an element, $\tau$ of order $2$, not in $A$ must square to
a non-trivial element of $A$  or it would  split.  Let $K$ be the kernel of the linear character on $A$.  Pick a linear character, $\eta$  of $\chi| A$ which does not have $\tau^2$ in the kernel
which we take to be a hyperplane $K$.   We may have to take one of the conjugates under $\alpha$ for this but we retain the same notation.  This means $\tau^2 $ is $a$, a non trivial
element  of $A$.  Here $\eta (a)=-1$.  Here $I_\eta/A$  is a group of order $2$ in $I_\eta/A$ which we can take to be generated by $\tau$ mod $A$.  We can assume $\tau^2 = a$.  The map
$\phi$ of $I_\eta/K$ given by $\phi (\tau) = i$ with
kernel $K$ is a linear character of $I_\eta$ for which $\phi|A $ mod $K$  restricted to $A $ mod $K$  is $\eta$.   By \cite[$9.11$]{F}, $\chi $ is $\phi^G$.  The element $\tau$ inverts $\alpha$ and so has three $2$-cycles and one fixed point as a permutation in $Sym(7)$.  But this
means the character $\chi(\tau)= \pm i$ which is impossible for a real representation.  This means this case does not occur.  The same argument also shows the group of order $64\cdot 42$ must split as it has the same Sylow group with the element of order $2$ inverting the element of order $7$.

\begin{Cor}  The groups in Theorem \ref{Structure Theorem} must all be split extensions with the exception of the extension of the elementary Abelian group of order $8$ by $PSL(3,2)$ where there is
one non-split group and so Theorem \ref{Classif} holds.
\end{Cor}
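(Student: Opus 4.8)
The plan is to establish the Corollary by assembling the case-by-case results proved above and organizing them along the list of admissible quotients $G/A$ from Theorem \ref{Structure Theorem}. Because $A$ is an elementary abelian $2$-group, the governing tool throughout is the Gasch\"utz theorem \cite[$10.4$]{Asch}, which reduces the question of whether $G$ splits over $A$ to whether a Sylow $2$-subgroup of $G$ splits over $A$. First I would dispose of the quotients of odd order---those of order $7$ and $7\cdot 3$ in both the $|A|=8$ and $|A|=64$ cases---where $|A|$ and $|G/A|$ are coprime and the Schur--Zassenhaus theorem yields an immediate splitting. This leaves only the quotients of even order to analyze.

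Next I would treat the large even quotients occurring with $|A|=64$, namely $Sym(7)$, $Alt(7)$, and $PSL(3,2)$. Here the argument given above produces an explicit complement to a Sylow $2$-subgroup of $Sym(7)$: one realizes such a subgroup of order $16$ by permutation matrices and replaces each matrix of determinant $-1$ by its negative, landing inside the unimodular normal subgroup and so obtaining a subgroup of $G$ isomorphic to the Sylow $2$-subgroup of the quotient. By Gasch\"utz the $Sym(7)$ extension splits; the $Alt(7)$ extension then splits as a subgroup of index $2$, and the $PSL(3,2)$ extension splits as well, its Sylow $2$-subgroup of order $8$ being taken to coincide with that of $Alt(7)$.

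The step I expect to be the main obstacle is ruling out non-split extensions for the two even quotients lying in the normalizer of a Sylow $7$-subgroup, of orders $64\cdot 7\cdot 2$ and $64\cdot 42$, since here $|A|$ and $|G/A|$ are not coprime and no explicit complement is evident. The key is a reality obstruction. If such an extension were non-split, then by Gasch\"utz the inertial subgroup $I_\eta$ of index $7$ would itself be non-split over $A$, forcing an involution $\tau\notin A$ with $\tau^2=a$ a non-trivial element of $A$. I would choose the linear character $\eta$, up to an $\alpha$-conjugate, so that $\eta(a)=-1$; then the linear character $\phi$ of $I_\eta$ with $\phi(\tau)=i$ restricts to $\eta$ on $A$ modulo its kernel and induces to the irreducible $\chi=\phi^G$ by \cite[$9.11$]{F}. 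Because $\tau$ inverts $\alpha$, it acts on the seven spaces as a product of three transpositions with a single fixed point, so in the induced representation the single fixed basis vector contributes $\phi(\tau)=\pm i$ to the trace, giving $\chi(\tau)=\pm i$. This is impossible for a real orthogonal representation, so the non-split case does not occur and both quotients split.

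Finally I would isolate the exceptional case of $A$ of order $8$ with quotient $PSL(3,2)$, which is precisely where the reality obstruction fails to apply and a genuine non-split extension survives. Here I would rely, as above, on GAP \cite{GAP}: the constraints---a perfect group of order $1344$ containing the prescribed elementary abelian normal subgroup of order $8$ with quotient $PSL(3,2)$---leave exactly two isomorphism types, one split and one non-split, and both carry an integral irreducible character of degree $7$ and hence embed in $O(7)$ by Theorem \ref{orthog}. Collecting all the cases shows that every extension in Theorem \ref{Structure Theorem} splits with this single exception, which is exactly the statement of the Corollary and completes the verification of Theorem \ref{Classif}.
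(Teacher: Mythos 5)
Your proposal is correct and follows essentially the same route as the paper: Gasch\"utz reduction to Sylow $2$-subgroups, the explicit permutation-matrix complement (with sign adjustment) for the $Sym(7)$, $Alt(7)$ and $PSL(3,2)$ quotients over $A$ of order $64$, the reality obstruction $\chi(\tau)=\pm i$ ruling out non-split extensions for the quotients of orders $14$ and $42$, and the GAP computation isolating the unique non-split extension of $A$ of order $8$ by $PSL(3,2)$. The only cosmetic difference is your explicit appeal to Schur--Zassenhaus for the odd quotients, where the paper simply cites ``elementary arguments'' or Gasch\"utz.
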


This finishes the proof of Theorem \ref{Classif}.

\end{section}

\begin{section}{Specific calculations of numbers of conjugate classes.}
\bigskip
\begin{table}[ht]
\begin{center}
\begin{tabular}{|c|c|c|c|}
\hline
Group Order of $G$ & NFC & FC & Conjugate Classes\\
\hline
   &&&  \\
\hline
Case $2$ &&&  \\
\hline
$8\cdot 7$ & $7$ & $1$ & $8$  \\
\hline
$8\cdot 21$ & $5$ & $3$ &  $8$  \\
\hline
$8\cdot 168 $\quad{\rm split}  & $6$  &  $5$ &  $11 $   \\
\hline

$8\cdot 168 $ \quad{\rm non-split}  & $6$  &  $5$ &  $11 $   \\
\hline
  & & & \\
\hline

Case $3$ &&&  \\
\hline

$64 \cdot 7 $ & $ 7 $ &  $1\cdot 9=9$ &  $16$  \\
\hline
$64\cdot 14$  &  $5$  &  $2\cdot 9 = 18$ &  $ 23 $   \\
\hline
$64 \cdot 21$ &  $5$  &  $3\cdot 9=27$  & $32$  \\
\hline
$64 \cdot 42 $ & $10$  &  $6\cdot 9=54$ & $64$  \\
\hline
$64\cdot 168$  & $6$  &  $5\cdot 9=45$ & $51$   \\
\hline
$64\cdot {7!\over2}$ &  $9$  & $7\cdot 9=63$  &  $72$  \\
\hline
$64 \cdot 7!$  &  $15$ & $11\cdot 9=99$   &  $114$  \\

\hline
\end{tabular}
\end{center}
\caption{}{\textrm Number of conjugate classes of groups $G$ as listed in Theorem $1$.  For the middle columns
see Definitions \ref{NFC} and \ref {FC}. }
\end{table}

In this section we determine the number of conjugate classes for the groups in Theorem \ref{Classif} except for $PSL(3,2) $ which can be found in \cite[page 3]{Atlas}.
For some explanations for the computations for Table $1$ we refer to section $7$.  In each of the groups, $G$, there is an elementary Abelian
normal subgroup of order $8$ or $64$.  The number $NFC$ is the number of characters which are non-faithful on $A$.  This number
is the number of irreducible characters of $G/A. $  The first four rows of the table, labeled case $2$, have $|A|=8$.  For the first four rows in the table $G/A$ is $Z_7$, non-Abelian of order $21$, $PSL(3,2)$
and $PSL(3,2)$ again.
There are seven irreducible characters of a cyclic group of order $7$.  For the non-Abelian group of order $21$ there are five characters
two of degree $3$ and three of degree $1$.  For $PSL(3,2)$ there are six characters as seen from \cite[page $3$]{Atlas}.

For the remaining groups in the table labeled case $3$ the group $A$ has order $64$.  The groups
$G$ in order are $Z_7$, dihedral of order $14$, non-Abelian of order $21$, semidirect product of $Z_7$ and $Z_6$ acting fixed point freely on
$Z_7$, $PSL(3,2)$, $Alt(7)$, and $Sym(7)$.  As above there are seven characters of $Z_7$.  There are five characters of the dihedral group of
of order $14$, three of degree $2$ and two of degree $1$.  As above there are five characters of the group of order $21$.  For the group of
order $42$ there are four characters of degree $3$ and six of degree $1$.   For the group of order $168$ there are six characters as above.  For $Alt(7)$ there
are nine characters by \cite[page $10$]{Atlas}.  For $Sym(7)$ there are fifteen.  This can be seen from \cite[page $10$]{Atlas} using
the automorphism.  Here the two characters of degree ten are fused and each of the remaining ones extend to two characters.  This gives the values
in the second column.

In the third column we list the number of characters faithful on $A$.  Here let $\eta$ be a linear character of $A$  with kernel $K$ a subgroup of index
$2$ in $A$.  We need the number of characters of the  group  $I_\eta/K$ which when restricted to $A/K$ are $\eta$ which means they are non trivial.  The group $I_\eta$
is the inertial group of $\eta$ and was described in section $7$.  As it fixes $\eta$ under conjugation it also fixes $K$.
For the first case of $G=8\cdot 7$, $I_\eta/K$ is just $A/K$ and so there is only one character, as in the table.  For the group of order $21$, $I_\eta/K$ is a cyclic group
of order six so there are three.  For the case of $A/K \cong PSL(3,2)$ which splits, the group $I_\eta/K$ is $Z_2 \times Sym(4)$.  This can be seen from the \cite[page $3$]{Atlas} as
the subgroup of index seven.  There are in fact two such subgroups not conjugate in the automorphism group.  As $Sym(4)$ has five characters two of degree $3$, one of
degree $2$, and two of degree $1$. Similar reasoning works in the non-split case.  We also checked these values using GAP to give the number of conjugate
classes as $11$.   The group of order $21$ is
non-Abelian

Similar reasoning gives the values when $|A|$ has order $64$ which is case $3$.  Here, however, after finding the number of characters of $I_\eta/K$ which
are non-trivial on $A/K$ we multiply by $9$ as there are nine conjugacy classes of hyperplanes.   The same reasoning as above works when $G/A$ has order $7$ or $21$.  For $G/A$ of order $14$, $I_\eta/K$
is $Z_2\times Z_2$ and so the answer is $2$.  For the group of order $64\cdot 42$, $I_\eta/K$ is $Z_2 \times Z_6$.  This means there are six.
 For the case of $64\cdot 168$ the result is $5\cdot 9$ as above. This is a split extension and again $I_\eta/K\cong Z_2\times Sym(4)$.  For the case of $G/A\cong Alt(7)$,  $I_\eta/K \cong Alt(6) \times Z_2$.  Now
by \cite[page $5$]{Atlas} there are seven characters faithful on the $Z_2$.  For $G/A \cong Sym(7)$, $I_\eta / K $ is $Z_2\times Sym(6)$.  Using \cite[page $5$]{Atlas} there are eleven such characters as $Sym(6)$
has eleven characters, as the two characters of degree $8$ in $Alt(6)$ are fused and the remaining five extend in two ways.

\end{section}


\begin{thebibliography}{99}

\bibitem{Asch} Michael Aschbacher, {\it Finite Group Theory}, Cambridge University Press, 1986.

\bibitem{Collins}  Michael Collins, {\it Representations and Characters of Finite Groups} Cambridge University Press, 1990.

\bibitem{Atlas} J. H. Conway, R. T. Curtis, S. P. Norton, R. P. Parker, and R. A. Wilson, {\it
Atlas of Finite Groups}, Clarendon Press, Oxford, 1985.

\bibitem{GAP} {\it The GAP Group (2002), GAP-Groups, Algorithms and Programming},  Aachen, St. Andrews, available at http://www-gap.dcs.st-and.ac/uk/gap.

\bibitem{CR} Charles W. Curtis, Irving Reiner, {\it Representation Theory of Finite Groups and Associative algebras}, John Wiley and Sons

\bibitem{F} Walter Feit, {\it Characters of Finite Groups}, W. A. Benjamin, Inc,  1967.

\bibitem{W} David Wales, {\it Finite Linear Groups of Degree Seven II}, Pac. Journal, vol. 34, No. 1, pp 207--235.

\bibitem{DW} David Wales, {\it Finite Linear Groups in Seven Variables}, Bull. AMS, 1968, pp 197--198.
\end{thebibliography}
\end{document}